\crefname{hypothesis}{Hypothesis}{Hypotheses}
\title{A Generalization of Classical Formulas in Numerical Integration and Series Convergence Acceleration}
\author{Ibrahim M. Alabdulmohsin\thanks{Google Research, Z\"urich, Switzerland 
  (\email{ibomohsin@google.com})}
  }
\begin{document}

\maketitle

\begin{abstract}
Summation formulas, such as the Euler-Maclaurin expansion or Gregory's quadrature, have found many applications in mathematics, ranging from accelerating series, to evaluating fractional sums and analyzing asymptotics, among others. We show that these summation formulas actually arise as particular instances of a \emph{single} series expansion, including Euler's method for alternating series. This new summation formula gives rise to a family of polynomials, which contain both the Bernoulli and Gregory numbers in their coefficients. We prove some properties of those polynomials, such as recurrence identities and symmetries. Lastly, we present
one case study, which illustrates one potential application of the new expansion for finite impulse response (FIR) filters.
\end{abstract}

\begin{keywords}
  Numerical integration; asymptotics; downsampling; Euler-Maclaurin formula; convergence acceleration; 
\end{keywords}

\begin{AMS}
  65B15, 40A25, 11B68, 41A60
\end{AMS}

\section{Introduction}\label{sect::intro}
Classical summation formulas, such as the Euler-Maclaurin expansion or Gregory's quadrature, have found many applications in mathematics. Consider, for example, the classical problem of evaluating numerically the value of a definite integral. Given a function $f:\mathbb{R}^+\to\mathbb{R}$ that is Riemann  integrable over the domain $[0, n]$, a simple method for approximating the value of its integral is the trapezoidal rule for some small $x\ll 1$ such that $n/x$ is an integer:
\begin{equation}\label{reimann_sum}
\int_0^n f(t) \dif t \approx 
x\,\sum_{k=0}^{\frac{n}{x}-1} \frac{f(xk)+f(x(k+1))}{2} = x \sum_{k=0}^{\frac{n}{x}-1} f(kx) \,+ \frac{x\,(f(n)-f(0))}{2},
\end{equation}
whose error term is $O(x^2)$. The trapezoidal rule has an intuitive geometrical interpretation from which it obtained its name, but such an interpretation also suggests further improvements using higher order derivatives of $f$. Indeed, Euler originally used this intuition to derive the celebrated Euler-Maclaurin summation formula \cite{apostol1999elementary,pengelley2019dances}, which formally states that:
\begin{equation}\label{eq:em}
    \int_0^n f(t)\dif t = x\sum_{k=0}^{\frac{n}{x}-1}f(kx) - \sum_{r=1}^\infty \frac{B_r}{r!} (f^{(r-1)}(n)-f^{(r-1)}(0))\,x^r,
\end{equation}
where $B_r$ are the Bernoulli numbers with the convention $B_1=-\frac{1}{2}$. Here, equality is  formal, meaning that both the left and the right-hand side share the same power series expansion on $n$. In particular, equality holds when $f$ is a polynomial. Whereas the series on the right-hand side diverges for most functions of interest, the Euler-Maclaurin formula is an \emph{asymptotic} expansion in a precise sense: under certain conditions on $f$, the error of truncating the series is bounded by the last used term \cite{varadarajan2007euler}. This is remarkably consistent with Euler's (perhaps surprising) assertion that the series on the right-hand side could be used \lq\lq until it begins to diverge!" 

Armed with the Euler-Maclaurin summation formula, one can now accelerate numerical integration by including additional terms involving higher order derivatives. Indeed, this is sometimes used by software packages, such as Mathematica \cite{lampret2001euler}. For example, if $f(t)=e^{-t^2}$, $x=0.1$ and $n=1$, the trapezoidal rule has an actual error of about $6\times 10^{-4}$ but adding another term from the Euler-Maclaurin formula (i.e. $r=2$) reduces the actual error to less than $2\times 10^{-7}$. 

Besides using higher-order derivatives, one may use finite differences to improve the numerical approximation as well. By expressing derivatives as series of finite differences and substituting those into the Euler-Maclaurin formula, one obtains Gregory's quadrature expansion. For $x=1$, this expansion reduces  formally to \cite{alabdulmohsin2018summability,sinha2011numerical}:
\begin{equation}\label{gregory_formula}
    \int_0^n f(t)\dif t = \sum_{k=0}^{n-1}f(k) + \sum_{r=1}^\infty G_r (\Delta^{r-1} f(n)-\Delta^{r-1}f(0)),
\end{equation}
where $\Delta^0f(n) = f(n)$ and $\Delta^{r+1}f(n) = \Delta^rf(n+1)-\Delta^rf(n)$ are the forward differences. The sequence $G_r=(1, \frac{1}{2}, -\frac{1}{12}, \ldots)$ is often called the Gregory coefficients and has the generating function:
\begin{equation}\label{eq::gregory_gen}
    \frac{z}{\log (1+z)} = \sum_{r=0}^\infty G_rz^r.
\end{equation}
It can be shown that the Gregory coefficients satisfy  $|G_r|=O(1/(r\log(r)))$ \cite{blagouchine2016two}. Using the error bound of the Gregory summation formula \cite[p.~285]{jordan1965calculus}, one concludes that equality in (\ref{gregory_formula}) holds when $\sup_{0\le\xi\le n}\Delta^rf(\xi)$ remains bounded as $r\to\infty$. This is, for example, the case when $f(t)=1/(1+t)$, whose forward differences are $\Delta^r f(n)=(-1)^rr!/(n+r+1)_{r+1}$, where $(x)_r=x!/(x-r)!$ is the falling factorial. Plugging this into (\ref{gregory_formula}) and taking the limit as $n\to\infty$ gives a series representation of the Euler-Mascheroni constant: $\gamma = \sum_{r=1}^\infty (-1)^{r+1}{G_r}/{r}=0.5772\cdots$,
which is one of the earliest known expressions for such a fundamental constant as a series comprising only of rational terms \cite{alabdulmohsin2018summability,blagouchine2016two}.

Both the Euler-Maclaurin expansion and Gregory's quadrature formula approximate finite sums using integrals. It is not surprising, therefore, that one can use such formulas to accelerate the computation of series using integrals. Euler used this fact to evaluate the value of $\zeta(2) = \sum_{k=0}^\infty 1/(1+k)^2$ before establishing that it was, in fact, equal to $\pi^2/6$. He also used the summation formula to calculate to a high precision other  quantities, such as the Euler-Mascheroni constant $\gamma$ and $\pi$, among others \cite{varadarajan2007euler}.

Besides approximating definite integrals with finite sums (or vice versa), these summation formulas can be used to evaluate \emph{fractional} finite sums as well. In particular, the Euler-Maclaurin formula can be used as a \emph{formal} definition that leads to a method of computing fractional sums exactly despite the fact that the Euler-Maclaurin series itself diverges almost everywhere! This is described at length in the book \cite{alabdulmohsin2018summability}.


One limitation of both the Euler-Maclaurin summation formula and Gregory's quadrature formula is that they cannot be easily applied to alternating sums, such as $\sum_{k=0}^{n-1}(-1)^k(1+k)^s$ for $s\in\mathbb{R}$. 
In the latter case, the Euler summation method is often quite useful, which formally states that:
\begin{equation}\label{eq:euler_alternating}
    \sum_{k=0}^\infty (-1)^k f(k) = \sum_{r=0}^\infty (-1)^r\frac{\Delta^r f(0)}{2^{r+1}}.
\end{equation}
Assuming that $f(n)\to 0$ as $n\to\infty$ and that the expression in (\ref{eq:euler_alternating}) is valid, one obtains the analog of the Gregory quadrature formula for alternating sums by subtracting $\sum_{k=0}^\infty (-1)^kf(k+n)$ from $\sum_{k=0}^\infty (-1)^kf(k)$:
\begin{equation}\label{eq:alternating_summ}
    \sum_{k=0}^{n-1} (-1)^k f(k)= \sum_{r=0}^\infty \frac{(-1)^{r+1}}{2^{r+1}} \big(\Delta^rf(n)-\Delta^rf(0)\big).
\end{equation}
Euler used (\ref{eq:euler_alternating}) frequently to accelerate series convergence and even evaluate divergent series, such as $\sum_{k=0}^{n-1}(-1)^k(1+k)^s$ for $s\in\mathbb{N}$, which allowed him to compute $\zeta(s)$ at non-positive integers \cite{kline1983euler}. 

The Euler-Maclaurin summation formula, Gregory's quadrature formula, and Euler's summation method for alternating series are all useful tools that have found many applications. But, what do these summation formulas have in common? 
In this paper, we show that the aforementioned summation formulas are, in fact, particular instances of a \emph{single} series expansion that approximates finite sums of the form $\sum_{k=0}^{n-1}f(k)$ using the corresponding \emph{downsampled} finite sums  $x\sum_{k=0}^{\frac{n}{x}-1}f(xk)$. Different summation formulas can be obtained by simply choosing different values of $x$.

\section{Generalized Summation Formula}Throughout the sequel, we define a fractional sum $\sum_{k=0}^nf(k)$ for all $n\in\mathbb{R}$ when $f$ is a polynomial by the Euler-Maclaurin formula. 
We begin with a statement of the main theorem.
\begin{theorem}\label{main::theorem}
Let $f:\mathbb{R}\to\mathbb{R}$ be a polynomial. Then, for any $x, n\in\mathbb{R}$:
\begin{equation}\label{main_sum_formula1}
\sum_{k=0}^{n-1}\;f(k) = x\sum_{k=0}^{n/x-1} f(kx)\;+\sum_{r=1}^{\infty} \frac{F_r(x)}{r!}\cdot \frac{\Delta_x^{r-1}f(n)-\Delta_x^{r-1}f(0)}{x^{r-1}},
\end{equation}
where $\Delta_xf(n)$ is the forward difference operator with step size $x$ (i.e. $\Delta_x f(n) = f(n+x)-f(n)$), $\Delta^r_xf(n) = \Delta_x\Delta^{r-1}_xf(n)$, and $F_r(x)$ is a polynomial of degree $r$ (see Table \ref{tab:fr}) given by the exponential generating function:
\begin{equation}\label{eq:frgenerating}
    \frac{z}{(1+xz)^\frac{1}{x}-1} = \sum_{r=0}^\infty \frac{F_r(x)}{r!} z^r.
\end{equation}
\end{theorem}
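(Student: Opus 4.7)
The plan is to recast both sums in (\ref{main_sum_formula1}) in operator form and reduce the identity to an algebraic manipulation of formal power series. Let $D$ denote differentiation, so that $E_x := e^{xD}$ is the shift-by-$x$ operator and $\Delta_x = e^{xD} - 1$. Because $f$ is a polynomial, every formal series in $D$ appearing below reduces to a finite sum when applied to $f$, so there is no convergence issue: the manipulations can be read either as identities in the ring of formal power series, or as identities of operators on polynomials.

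First I would rewrite both finite sums as operators acting on $f$, evaluated between $0$ and $n$. Telescoping gives $I + E + \cdots + E^{n-1} = (E^n - 1)/(E - 1)$, so with $E = e^D$ one obtains
\[
\sum_{k=0}^{n-1} f(k) \;=\; \left.\frac{1}{e^D - 1}\, f\right|_0^n, \qquad x\sum_{k=0}^{n/x - 1} f(kx) \;=\; \left.\frac{x}{e^{xD} - 1}\, f\right|_0^n.
\]
These are the standard operator forms of the Euler--Maclaurin definition of the fractional sum with step sizes $1$ and $x$, respectively: expanding $1/(e^D-1) = 1/D - 1/2 + \sum_{r\ge 1} B_{2r}\,D^{2r-1}/(2r)!$ and evaluating between $0$ and $n$ reproduces (\ref{eq:em}) with $x=1$, and the same computation with $D$ replaced by $xD$ handles the general step. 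Both sides are polynomials in $n$ agreeing at all integers, so the identities hold for arbitrary real $n$.

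The main step is then the operator-level identity
\[
\frac{1}{e^D - 1} \;-\; \frac{x}{e^{xD} - 1} \;=\; \sum_{r=1}^{\infty} \frac{F_r(x)}{r!}\cdot\frac{\Delta_x^{\,r-1}}{x^{r-1}}.
\]
To establish it, I would introduce the auxiliary formal variable $u := \Delta_x/x = (e^{xD}-1)/x$, so that $e^{xD} = 1 + xu$, $D = (1/x)\log(1+xu)$, and consequently $e^D - 1 = (1+xu)^{1/x} - 1$. The left-hand side becomes
\[
\frac{1}{(1+xu)^{1/x} - 1} \;-\; \frac{1}{u} \;=\; \frac{1}{u}\left(\frac{u}{(1+xu)^{1/x}-1}\;-\;1\right),
\]
and by (\ref{eq:frgenerating}) the parenthesized factor equals $\sum_{r\ge 0} F_r(x)\,u^r/r! - 1$. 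A brief Taylor expansion $(1+xu)^{1/x} = 1 + u + \tfrac{1-x}{2}u^2 + \cdots$ shows $F_0(x) = 1$, so the $r=0$ term cancels; dividing by $u$ and restoring $u = \Delta_x/x$ yields the displayed series. Applying this operator identity to $f$ and evaluating at $n$ minus $0$ gives the theorem.

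The hard part is not the algebraic calculation, which is essentially forced by the chain of substitutions, but rather ensuring compatibility between the two layers of formalism: namely, that the Euler--Maclaurin-based fractional sum coincides with $\tfrac{1}{e^D-1}f|_0^n$, and analogously for the step-$x$ variant. Once this bridge is in place, the rest is a formal identity in $u$ whose coefficients are, by definition, the polynomials $F_r(x)$. A sanity check is that $x \to 0$ recovers (\ref{eq:em}) via $(1+xu)^{1/x} \to e^u$ and $F_r(0) = B_r$, while $x = 1$ collapses both sides trivially since $(1+u)^1 - 1 = u$ forces $F_r(1) = 0$ for $r \ge 1$.
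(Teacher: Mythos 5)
Your proposal is correct, but it takes a recognizably different route from the paper's own proof, even though both hinge on the same generating function. You represent both sums through the Euler--Maclaurin operator, $\sum_{k=0}^{n-1}f(k)=\frac{1}{e^{D}-1}f\big|_0^n$ and $x\sum_{k=0}^{n/x-1}f(kx)=\frac{x}{e^{xD}-1}f\big|_0^n$, which is legitimate here precisely because the paper \emph{defines} fractional sums of polynomials by the Euler--Maclaurin formula, and your expansion check (step $1$ and step $x$) is exactly the verification that the operator expressions reproduce that definition; the theorem then falls out of the substitution $u=\Delta_x/x$, $e^{D}=(1+xu)^{1/x}$, together with $F_0(x)=1$ so that the $1/D$ parts cancel and the difference of the two summation operators is an honest power series in $\Delta_x$. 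The paper instead never introduces $D$ at all: it works purely in the difference-operator algebra, starting from $\Delta_x=(1+\Delta)^x-1$, applies the operator $\frac{\Delta_x}{x[(1+\Delta_x)^{1/x}-1]}$ (the same series whose coefficients are $F_r(x)/(r!x^r)$, i.e.\ your $u$-expansion in disguise) to the indefinite sum $g(n)=\sum_{k=0}^{n/x-1}f(xk)$ with $\Delta_x g=f$, and identifies $\sum_{k=0}^{n-1}f(k)$ via Newton's forward-difference interpolation formula rather than via $1/(e^{D}-1)$. The trade-off: your derivation makes the $x\to 0$ Euler--Maclaurin limit and the special values ($F_r(1)=0$, $F_r(0)=B_r$) transparent, but it builds the Euler--Maclaurin expansion in as an input, so the bridge ``EM-defined fractional sum $=$ operator expression'' (including for the step-$x$ sum with non-integer upper limit $n/x$, which you dispatch rather briskly) carries real weight; the paper's argument is self-contained within finite differences, so Euler--Maclaurin emerges later as a corollary of the theorem rather than an ingredient of its proof. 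One small caveat in your write-up: the remark that the identities hold for real $n$ because ``both sides are polynomials in $n$ agreeing at all integers'' is unnecessary and slightly misplaced for the step-$x$ sum (telescoping there needs $n/x$, not $n$, to be a nonnegative integer); the correct and sufficient justification is the one you also give, namely that expanding the operator reproduces the Euler--Maclaurin series, which is the definition of the fractional sum for arbitrary real upper limit.
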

\begin{proof}
First, we observe that the series on the right-hand side of (\ref{main_sum_formula1}) contains finitely many terms because $f$ is a polynomial and, thus, there exists an integer $m\ge 0$ such that the forward differences $\Delta_x^{r-1}f(n)$ vanish for all $n$ and all $r\ge m$. Let $\Delta_x$ be the forward difference operator and write $\Delta=\Delta_1$. Our starting point is the identity:
\begin{equation}\label{eq::deltaxidentiy}
    \Delta_x = (1+\Delta)^x-1,
\end{equation}
which holds because $(1+\Delta)^x f = f(x)$ is the shift operator. More precisely, the identity in (\ref{eq::deltaxidentiy}) is interpreted by expanding the right-hand side as a formal power series using the binomial theorem. Applied to any polynomial $f$, this turns into a finite sum.
It can be shown in the latter case that the identity is justified for all $x\in\mathbb{R}$ \cite[p.~11]{jordan1965calculus}. By formally inverting such an identity, one obtains:
\begin{equation}\label{eq:proof_1}
    \frac{\Delta_x}{x\cdot [(1+\Delta_x)^\frac{1}{x}-1]} = \frac{(1+\Delta)^x-1}{x\,\Delta},
\end{equation}
in which each side is interpreted as a formal power series in the corresponding symbols $\Delta_x$ and $\Delta$.
The identity (\ref{eq:proof_1}) is justified as follows. First, consider the expression on the left-hand side, which is a polynomial in $\Delta_x$. Because the identity in (\ref{eq::deltaxidentiy}) holds, substituting for $\Delta_x$ its expression in (\ref{eq::deltaxidentiy}) would correspond to the polynomial in $\Delta$ on the right-hand side of (\ref{eq:proof_1}). Therefore, both sides must be equal.

However, the left-hand side, as a formal power series, is written as:
\begin{equation*}
    \frac{\Delta_x}{x\cdot [(1+\Delta_x)^\frac{1}{x}-1]} = \sum_{r=0}^\infty \frac{F_r(x)}{r!\cdot x^r}
    \Delta^r_x
\end{equation*}
Consider, now, the polynomial $g(n) = \sum_{k=0}^{n/x-1}f(xk)$, which satisfies $\Delta_x g(n) = f(n)$. We apply the operator above to $g$:
\begin{equation*}
    \Big(\frac{\Delta_x}{x\cdot [(1+\Delta_x)^\frac{1}{x}-1]}\Big) \cdot g(n)= \sum_{k=0}^{\frac{n}{x}-1}f(xk) + \sum_{r=1}^\infty \frac{F_r(x)}{r!\cdot x^r}\Delta^{r-1}_x f(n)
\end{equation*}
Using again the fact that $(1+\Delta)^n$ is a shift operator (shift by $n$) and the fact that $\sum_{k=0}^{-1}f(k)=0$ for all polynomials $f$ in the Euler-Maclaurin formula \cite{alabdulmohsin2018summability,muller2011add}, we have:
\begin{equation}\label{eq:proof2}
    \Big(\frac{((1+\Delta)^n-1)\Delta_x}{x\cdot [(1+\Delta_x)^\frac{1}{x}-1]}\Big)\cdot g  = \sum_{k=0}^{\frac{n}{x}-1}f(xk) + \sum_{r=1}^\infty \frac{F_r(x)}{r!\cdot x^{r}}\big(\Delta^{r-1}_x f(n)-\Delta^{r-1}_x f(0)\big)
\end{equation}
On the other hand, because $((1+\Delta)^x-1)\cdot g = \Delta_xg = f(n)$:
\begin{equation}\label{eq:proof3}
    \Big(\frac{((1+\Delta)^n-1)((1+\Delta)^x-1)}{x\,\Delta}\Big)\cdot g = \Big(\frac{((1+\Delta)^n-1)}{x\Delta}\Big)\cdot f = \frac{1}{x}\sum_{k=0}^{n-1}f(k),
\end{equation}
where the last equality follows from Newton's interpolation formula \cite{jordan1965calculus}. Equating both terms in (\ref{eq:proof2}) and (\ref{eq:proof3}) yields the statement of the theorem.
\end{proof}

 \begin{table}[t]
    \centering
    \small
    \begin{tabular}{lcl|lcl}
    \toprule
    $F_0(x)$ & $=$ &$\phantom{+}1$ &$F_0^\star(x)$ &$=$ &$\phantom{+}1$\\\midrule
    $F_1(x)$ & $=$ &$\phantom{+}(x-1)/2$ &$F_1^\star(x)$ &$=$ &$-(x-1)/2$\\\midrule
    $F_2(x)$ & $=$ & $-(x^2-1)/6$ &$F_2^\star(x)$ &$=$ &$\phantom{+}(x^2-1)/6$\\\midrule
    $F_3(x)$ & $=$ & $\phantom{+}(x^3-x)/4$ &$F_3^\star(x)$ &$=$ &$-(x^2-1)/4$\\\midrule
    $F_4(x)$ & $=$ & $-(19x^4-20x^2+1)/30$ &$F_4^\star(x)$ &$=$ &$-(x^4-20x^2+19)/30$\\\midrule
    $F_5(x)$ & $=$ & $\phantom{+}(9x^5-10x^3+x)/4$ &$F_5^\star(x)$ &$=$ &$\phantom{+}(x^4-10x^2+9)/4$\\\midrule
    \end{tabular}
    \caption{The first five polynomials of $F_r(x)$ ({\sc left}) and $F_r^\star(x)=x^rF_r(1/x)$ ({\sc right}) that arise in the generalized summation formula. $F_r(0)$ are the Bernoulli numbers whereas $F_r^\star(0)/r!$ are the Gregory coefficients.
    }
    \label{tab:fr}
\end{table}

\begin{corollary}\label{main_cor}
Let $f:\mathbb{R}\to\mathbb{R}$ be a polynomial. Then, for any $n, x\in\mathbb{R}$:
\begin{equation}\label{main_sum_formula2}
x\sum_{k=0}^{n/x-1} f(kx) = \sum_{k=0}^{n-1} f(k) + \sum_{r=1}^\infty \frac{F_r^\star(x)}{r!}\cdot (\Delta^{r-1}f(n)-\Delta^{r-1}f(0)),
\end{equation}
where $F_r^\star(x) = x^rF_r(1/x)$ is a polynomial of degree, at most, $r$. 
\end{corollary}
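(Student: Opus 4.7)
The plan is to derive (\ref{main_sum_formula2}) directly from Theorem \ref{main::theorem} by applying the latter to a rescaled polynomial, so as to swap the roles of the step-$1$ sum and the step-$x$ sum. Concretely, I would introduce $y = 1/x$, $g(t) = x\,f(xt)$, and $m = n/x$, and then invoke Theorem \ref{main::theorem} with the triple $(g, m, y)$ in place of $(f, n, x)$. Since $g$ is a polynomial whenever $f$ is, the hypothesis of Theorem \ref{main::theorem} is satisfied.

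The bulk of the work then consists of matching terms. The left-hand side reads $\sum_{k=0}^{m-1} g(k) = x\sum_{k=0}^{n/x-1} f(xk)$, which is exactly the left-hand side of (\ref{main_sum_formula2}); and the leading piece on the right, $y\sum_{k=0}^{m/y-1} g(ky)$, collapses to $\sum_{k=0}^{n-1} f(k)$. For the series, a short induction starting from the identity $\Delta_y g(t) = x\,\Delta f(xt)$ gives $\Delta_y^{r-1} g(t) = x\,\Delta^{r-1} f(xt)$, whence
\begin{equation*}
\frac{\Delta_y^{r-1} g(m) - \Delta_y^{r-1} g(0)}{y^{r-1}} \;=\; x^r\bigl(\Delta^{r-1}f(n) - \Delta^{r-1}f(0)\bigr).
\end{equation*}
Multiplying by $F_r(y)/r! = F_r(1/x)/r!$ and invoking the definition $F_r^\star(x) = x^r F_r(1/x)$ turns the $r$-th term of the series in (\ref{main_sum_formula1}) into the $r$-th term of the series in (\ref{main_sum_formula2}).

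The degree bound on $F_r^\star$ is immediate: since $F_r$ is a polynomial of degree $r$, the expression $x^r F_r(1/x)$ is a polynomial in $x$ of degree at most $r$. I do not foresee any real obstacle here; once the rescaling $g(t) = x f(xt)$ is chosen, the proof is essentially a change of variables in the statement of Theorem \ref{main::theorem}, and the only step requiring slight care is the direct verification of the elementary identity $\Delta_y g(t) = x\,\Delta f(xt)$ from the definition of the forward difference with step $y = 1/x$, before iterating it.
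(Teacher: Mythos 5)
Your proposal is correct and follows essentially the same route as the paper: both apply Theorem \ref{main::theorem} to a rescaled polynomial with reciprocal step (the paper uses $h(w)=f(tw)$ and multiplies through by $t$ at the end, while your choice $g(t)=x\,f(xt)$ absorbs that factor up front), and then identify the terms via $\Delta_{1/x}^{r-1}g = x\,\Delta^{r-1}f$ and the definition $F_r^\star(x)=x^rF_r(1/x)$. No gaps; your explicit verification of the difference-operator identity is a minor addition of rigor over the paper's terse "by construction."
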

\begin{proof}
Given a polynomial $f$, we construct the second polynomial $h(w) = f(tw)$ and set $m=n/t$. By Theorem \ref{main::theorem}:
\begin{equation*}
    \sum_{k=0}^{m-1}h(k) =  x\sum_{k=0}^{m/x-1}h(kx) +  \sum_{r=1}^\infty \frac{F_r(x)}{r!}\cdot \frac{\Delta_x^{r-1}h(m)-\Delta_x^{r-1}h(0)}{x^{r-1}}
\end{equation*}
However, $\sum_{k=0}^{m-1} h(k) = \sum_{k=0}^{n/t-1}f(tk)$ and $\sum_{k=0}^{m/x-1} h(kx) = \sum_{k=0}^{m/x-1}f(txk)$. We choose $x=1/t$, which gives $\sum_{k=0}^{m/x-1}h(kx) = \sum_{k=0}^{n-1}f(k)$. Finally, we note that $h(m)=h(n/t)=f(n)$ and that $\Delta^r_{1/t}h(m) = \Delta^r f(n)$ by construciton.
\end{proof}


\section{Some Properties of the polynomials}Before showing that the classical summation formulas are particular instances of the generalized summation formula in (\ref{main_sum_formula1}), let us briefly analyze the polynomials $F_r(x)$ and $F_r^\star(x)$ whose first values are provided in Table \ref{tab:fr}. We will establish recurrence relations for those polynomials, which generalize the classical recurrence relations for Bernoulli numbers, and provide explicit values for particular choices of $x$.

\begin{figure}
    \centering
\pgfplotsset{every axis/.append style={
font=\Huge,
line width=1pt,
tick style={line width=1pt}}}

\begin{tikzpicture}[scale=0.3]
	\begin{axis}[
	title = {$F_1(x)$},
		xmin = -1.5, xmax = 1.5,
	    ymin = -0.75, ymax = 0.75,
	    ytick distance = 0.75,
	    xtick distance = 0.5,
	    width = \textwidth, height=0.5\textwidth]
		\addplot[domain = -1.5:1.5, samples=200, smooth,  densely dashed] {(x-1)/2};
	\end{axis}
\end{tikzpicture}\begin{tikzpicture}[scale=0.3]
	\begin{axis}[
	title = {$F_2(x)$},
		xmin = -1.5, xmax = 1.5,
	    ymin = -0.75, ymax = 0.75,
	    ytick distance = 0.75,
	    xtick distance = 0.5,
	    width = \textwidth, height=0.5\textwidth]
		\addplot[domain = -1.5:1.5, samples=200, smooth,  densely dashed] {-(x^2-1)/6};
	\end{axis}
\end{tikzpicture}\begin{tikzpicture}[scale=0.3]
	\begin{axis}[
	title = {$F_3(x)$},
		xmin = -1.5, xmax = 1.5,
	    ymin = -0.75, ymax = 0.75,
	    ytick distance = 0.75,
	    xtick distance = 0.5,
	    width = \textwidth, height=0.5\textwidth]
		\addplot[domain = -1.5:1.5, samples=200, smooth,  densely dashed] {(x^3-x)/4};
	\end{axis}
\end{tikzpicture}

\begin{tikzpicture}[scale=0.3]
	\begin{axis}[
	title = {$F_4(x)$},
		xmin = -1.5, xmax = 1.5,
	    ymin = -0.75, ymax = 0.75,
	    ytick distance = 0.75,
	    xtick distance = 0.5,
	    width = \textwidth, height=0.5\textwidth]
		\addplot[domain = -1.5:1.5, samples=200, smooth, densely dashed] {-(19*x^4-20*x^2+1)/30};
	\end{axis}
\end{tikzpicture}\begin{tikzpicture}[scale=0.3]
	\begin{axis}[
	title = {$F_5(x)$},
		xmin = -1.5, xmax = 1.5,
	    ymin = -0.75, ymax = 0.75,
	    ytick distance = 0.75,
	    xtick distance = 0.5,
	    width = \textwidth, height=0.5\textwidth]
		\addplot[domain = -1.5:1.5, samples=200, smooth, densely dashed] {(9*x^5-10*x^3+x)/4};
	\end{axis}
\end{tikzpicture}\begin{tikzpicture}[scale=0.3]
	\begin{axis}[
	title = {$F_6(x)$},
		xmin = -1.5, xmax = 1.5,
	    ymin = -0.75, ymax = 0.75,
	    ytick distance = 0.75,
	    xtick distance = 0.5,
	    width = \textwidth, height=0.5\textwidth]
		\addplot[domain = -1.5:1.5, samples=200, smooth, densely dashed] {(863*x^6-1008*x^4+147*x^2-2)/84};
	\end{axis}
\end{tikzpicture}

    \caption{A plot of the polynomials $F_r(x)$ for $r$ in the set $\{1,2,\ldots,6\}$.}\label{Frplots}
\end{figure}
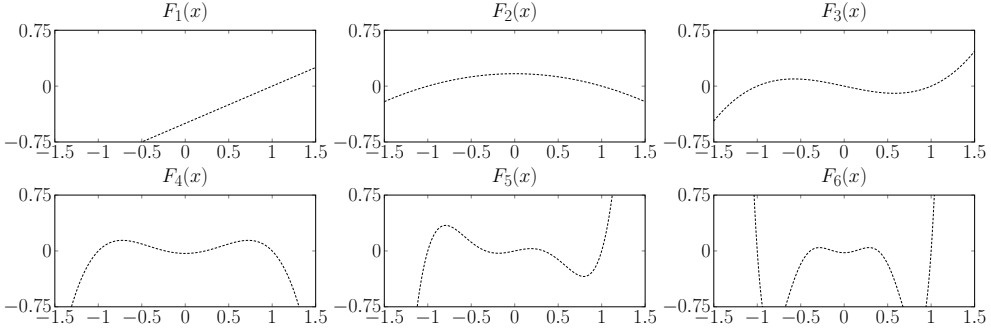

\subsection{Recurrence Relations}Starting with the exponential generating function of $F_r(x)$ rewrriten in the form:
\begin{equation}\label{eq:defFrstar}
    \frac{xy}{(1+y)^x-1} = \sum_{r=0}^\infty \frac{F_r(1/x)\cdot x^{r}}{r!} y^r,
\end{equation}
we make a second substitution $1+y=w$:
\begin{equation*}
    \frac{1-w}{1-w^x} = \frac{1}{x} \sum_{r=0}^\infty \frac{F_r(1/x)\cdot x^r}{r!} (w-1)^r \doteq g(w)
\end{equation*}
This is a formal Taylor series expansion of $g(w)$ around $w=1$. Thus, $x^{r-1}\,F_r(1/x) = g^{(r)}(1)$. Upon noting that $g(w) = 1-w+w^xg(w)$, we have for all $r\ge 2$:
\begin{equation}
    g^{(r)}(w) = \sum_{k=0}^r \frac{x!}{(x-r+k)!} {r\choose k} w^{x-r+k} g^{(k)}(w),
\end{equation}
which can be readily proved by induction on $r$ using Pascal's rule. We are interested in the case where $w=1$ for which $x^{r-1}\,F_r(1/x) = g^{(r)}(1)$. This gives us:
\begin{equation}\label{eq:rec1}
    \sum_{k=0}^{r-1} {r\choose k} (x)_{r-k}F_k^\star(x)= 0\quad\quad \text{ for all } r\ge 2,
\end{equation}
where $(x)_m=x(x-1)\cdots(x-m+1)$ is the falling factorial.
Using the definition $F_r^\star(x)=x^rF_r(1/x)$, we also deduce the analogous recurrence relation:
\begin{equation}\label{eq:rec2}
    \sum_{k=0}^{r-1} {r\choose k} \xi_{r-k}(x)F_k(x)= 0\quad\quad \text{ for all } r\ge 2,
\end{equation}
where $\xi_k(x) = \prod_{l=1}^k (1-lx)$. These recurrence relations can, in turn, be used to compute the values $F_r(x)$ or $F_r^\star(x)$ recursively for any particular choice of $x\in\mathbb{R}$.

\subsection{Particular Values}When $x=0$, the recurrence identity in (\ref{eq:rec2}) reduces to the well-known recurrence relation of the Bernoulli numbers. Since $F_0(0)=1=B_0$ and $F_1(0)=-1/2=B_1$, we conclude that $F_r(0)=B_r$. Later in Section \ref{sect::gregoryrelation}, we show that $F_r^\star(0)/r! = \lim_{x\to\infty}F_r(x)/(x^r\cdot r!)$ are equal to the Gregory coefficients as well.

Besides, one can immediately compute $F_r(x)$ for $x=1/2$. Using the exponential generating function in (\ref{eq:frgenerating}), we have:
\begin{equation*}
\sum_{r=0}^\infty (-1)^r\Big(\frac{z}{4}\Big)^r = \frac{1}{1+\frac{z}{4}} = 
    \frac{z}{(1+\frac{z}{2})^2-1} = \sum_{r=0}^\infty \frac{F_r(\frac{1}{2})}{r!}\,z^r
\end{equation*}
As a result, we have:
\begin{equation}\label{eq:fr_half}
    F_r\Big(\frac{1}{2}\Big) = (-1)^r\,\frac{r!}{4^r}.
\end{equation} 
Although it may not be obvious immediately, this last expression implies the Euler summation method for alternating series in (\ref{eq:alternating_summ}). Similarly, if we set $x=2$ in (\ref{eq:frgenerating}), we obtain using the well-known series expansions of $\sqrt{1+x}$: 
\begin{equation*}
\frac{1}{2}+\sum_{r=0}^\infty (-1)^r\frac{(2r)!}{(1-2r)(r!)^22^{r+1}}z^r = \frac{(1+2z)^\frac{1}{2}+1}{2} = \frac{z}{(1+2{z})^\frac{1}{2}-1}.
\end{equation*}
As a result, we know that for $r\ge 1$:
\begin{equation}\label{eq:fr_two}
 {F_r(2)}  = (-1)^r\frac{(2r)!}{(1-2r)(r!)2^{r+1}}
\end{equation}

Finally, we briefly comment on the \lq\lq trivial" roots of the polynomials $F_r(x)$. First, it is easy to observe that $x=1$ is a root of $F_r(x)$ for all $r\ge 1$ by  setting $x=1$ in the summation formula in (\ref{main_sum_formula1}). The second trivial root of $F_r(x)$ for  all $r\ge 2$ is $x=-1$. To see this, we have from (\ref{eq:defFrstar}):
\begin{equation*}
    \frac{z}{(1+xz)^\frac{1}{x}-1} = \sum_{r=0}^\infty \frac{F_r(x)}{r!}z^{r}
\end{equation*}
By plugging $x=-1$, we obtain:
\begin{equation*}
    1-z = \sum_{r=0}^\infty \frac{F_r(-1)}{r!}z^{r} = 1-z+ \sum_{r=2}^\infty \frac{F_r(-1)}{r!}z^{r}
\end{equation*}
Therefore, $F_r(-1)=0$ for all $r\ge 2$. Figure \ref{Frplots} plots $F_r(x)$ for $1\le r\le 6$. We conjecture that all of the remaining roots of $F_r(x)$ are real and lie in $[-1, +1]$ for $r\ge 2$.

\subsection{Symmetry}Starting with (\ref{eq:defFrstar}) and replacing $x$ with $-x$ gives us:
\begin{equation*}
    \frac{xy(1+y)^x}{(1+y)^x-1} = \sum_{r=0}^\infty \frac{F_r(-1/x)\cdot (-x)^{r}}{r!} y^r
\end{equation*}
Therefore, using (\ref{eq:defFrstar}) again and the definition of $F_r^\star(x)$:
\begin{equation*}
    (1+y)^x \sum_{r=0}^\infty \frac{F^\star_r(x)}{r!} y^r = \sum_{r=0}^\infty \frac{F_r^\star(-x)}{r!} y^r
\end{equation*}
Expanding $(1+y)^x$ and equating coefficients of $y^r$, one concludes that:
\begin{equation*}
    \sum_{k=0}^r {r\choose k} (x)_{r-k} F_k^\star(x) = F^\star_r(-x) 
\end{equation*}
Finally, by the recurrence identity in (\ref{eq:rec1}), we conclude that $F_r^\star(x)=F_r^\star(-x)$ for all $r\ge 2$. This shows that when $r\ge 2$, the polynomials $F_r^\star(x)$ are even, which implies that $F_r(x)$ is also even when $r$ is even and $F_r(x)$ is odd otherwise for $r\ge 2$.


\section{Relation to other summation formulas}\label{sect::relation}
Finally, we show that the three classical summation formulas mentioned earlier in Section \ref{sect::intro} fall as particular instances of the generalized summation formula in Theorem \ref{main::theorem}. To recall, the summation formula in Corollary \ref{main_cor} follows immediately from Theorem \ref{main::theorem} with a suitable transformation of the function $f$.
\subsection{Euler-Maclaurin}\label{sect::emrelation} We begin with (\ref{main_sum_formula1}) and take the limit as $x\to 0^+$. Doing this yields:
\begin{equation*}
    \sum_{k=0}^{n-1} f(k) = \int_0^n f(t) \dif t + \sum_{r=0}^\infty \frac{F_r(0)}{r!}\big(f^{(r-1)}(n)-f^{(r-1)}(0)\big),
\end{equation*}
which is identical to the Euler-Maclaurin formula in (\ref{eq:em}) since $F_r(0)$ are equal to the Bernoulli numbers as established in the previous section using the recurrence identity of $F_r(x)$. Therefore, the Euler-Maclaurin summation formula is a particular instance of the generalized summation formula in (\ref{main_sum_formula1}) when $x\to 0^+$.

\subsection{Gregory's quadrature}\label{sect::gregoryrelation}Next, we take the limit $x\to 0^+$ in (\ref{main_sum_formula2}) to obtain:
\begin{equation*}
    \int_0^n f(t) \dif t = \sum_{k=0}^{n-1} f(k) + \sum_{r=1}^\infty \frac{F_r^\star(0)}{r!} \big(\Delta^{r-1}f(n)-\Delta^{r-1}f(0)\big),
\end{equation*}
which is identical to Gregory's quadrature formula in (\ref{gregory_formula}) provided that the constants $F_r^\star(0)/r!$ are equal to the Gregory coefficients $G_r$. We can verify this last condition using the generating function in (\ref{eq:frgenerating}) and the definition of $F_r^\star$ to obtain:
\begin{equation*}
    \frac{x\,z}{(1+z)^x-1} = \sum_{r=0}^\infty \frac{F_r^\star(x)}{r!}z^r
\end{equation*}
By taking the limit as $x\to 0^+$ and comparing this with the generating function of Gregory's coefficients in (\ref{eq::gregory_gen}), we have $F_r^\star(0)/r! = G_r$ as desired.

\subsection{Euler's summation method}Last but not the least, we set $x=2$ in (\ref{main_sum_formula2}):
\begin{equation*}
    \sum_{k=0}^{n-1}(-1)^k f(k) = \sum_{r=1}^\infty \frac{F_r^\star(2)}{r!} \cdot \big(\Delta^{r-1}f(n)-\Delta^{r-1}f(0)\big).
\end{equation*}
However, by (\ref{eq:fr_half}), we have $F_r^\star(2)=(-1)^r\,r!/2^r$. Therefore:
\begin{equation*}
    \sum_{k=0}^{n-1}(-1)^k f(k) = \sum_{r=0}^\infty \frac{(-1)^{r+1}}{2^{r+1}} \cdot \big(\Delta^{r}f(n)-\Delta^{r}f(0)\big),
\end{equation*}
which is equivalent to Euler's method for alternating series in  (\ref{eq:euler_alternating}).

\section{Discussion}
Theorem \ref{main::theorem} and its corollary \ref{main_cor} show that several important classical summation formulas are particular instances of a single series expansion. The new series provides an expansion of the error of approximating a finite sum of the form $\sum_{k=0}^{n-1}f(k)$ using the corresponding downsampled (resp. upsampled) finite sum $x\sum_{k=0}^{\frac{n}{x}-1}f(xk)$ for $x>1$ (resp. $x<1$). We provide one illustration of the utility of this approach in statistical estimation, next. 

Consider the task of tracking the moving average of a time series data, such as for the purpose of detecting changes in the distribution \cite{basseville1988detecting}. More generally, one may consider any finite impulse response (FIR) filter, in which an arbitrary linear combination of a finite number of measurements is used \cite[p.~179]{firbook}. We use the gas sensor array temperature modulation dataset as an example \cite{burgues2018estimation}, which contains 14 temperature-modulated metal oxide gas sensor measurements collected over a period of three weeks yielding an excess of 4 million measurements. Let $n$ be the moving average window (i.e. averages are computed for each $n$ consecutive sensor measurements). We denote:
\begin{equation}\label{eq:downsampling}
    y[n; x] = x\sum_{k=0}^{\frac{n}{x}-1}f(xk),
\end{equation}
where $f(k)$ is the sensor's reading at time unit $k$. Setting $x=1$ corresponds to using the entire data. Nevertheless, downsampling (by choosing $x\in\mathbb{N}\setminus\{1\}$) has its advantages, such as in terms of memory, power, and communication footprint. 

Instead of using (\ref{eq:downsampling}) only, one can improve the approximation error of downsampling using Theorem \ref{main::theorem}. Let $\text{err}(R)$ be the approximation error:
\begin{equation}\label{eq:approx_error}
    \text{err}(R) = \Big|y[n; 1] - y[n; x] - \sum_{r=1}^R \frac{F_r(x)}{r!}\cdot \frac{\Delta_x^{r-1}f(n)-\Delta_x^{r-1}f(0)}{x^{r-1}}\Big|
\end{equation}

\begin{figure}[t]
    \centering
    \includegraphics[width=\linewidth]{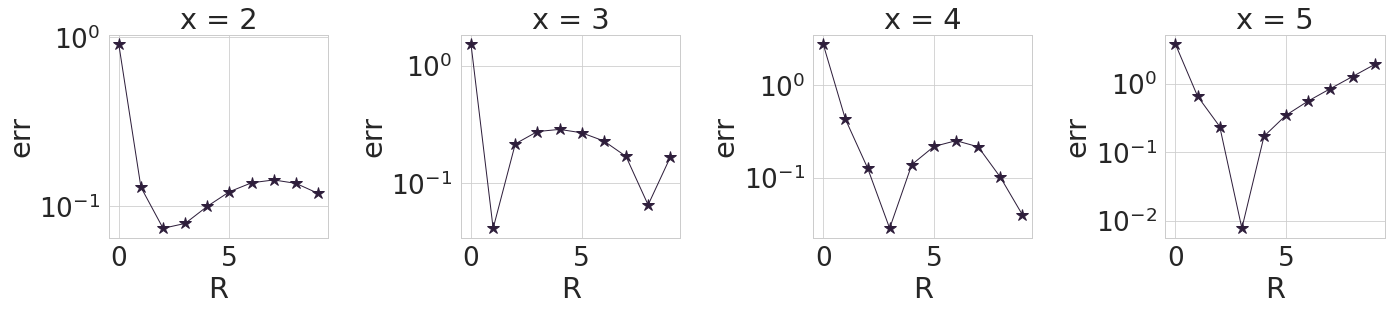}
    \caption{The series expansion of Theorem \ref{main::theorem} can be used to improve the error of downsampling. In this figure, the $y$-axis shows the error of approximating the moving average of a time series data using the moving average after downsampling by a factor of $x$ using Equation \ref{eq:approx_error}, where the $x$-axis is the order of the approximation $R$.}
    \label{fig:my_label}
\end{figure}

Then, Figure \ref{fig:my_label} shows the errors of such an approximation for $x\in\{2,3,4,5\}$ and different values of $R$. For the purpose of this experiment, we set $n=60$, which is the smallest number such that $n/x$ is an integer for all choices of $x$. Despite the fact that such a time series data does not correspond to an analytic function, it is sufficiently smooth that adding a few additional terms of the series expansion in Theorem \ref{main::theorem} reduces the approximation error by, at least, one order of magnitude.

\section*{Acknowledgment}
The author wishes to thank Hartmut Maennel from Google Research for the careful review and suggestions on an earlier draft of this manuscript.

\bibliographystyle{siamplain}
\bibliography{summ}
\end{document}